\documentclass[11pt]{article}

\usepackage[margin=1in]{geometry}
\usepackage{amsmath,amsthm,amssymb,amsfonts,mathtools}
\usepackage{microtype}
\usepackage{enumitem}
\usepackage{hyperref}

\hypersetup{
  colorlinks=true,
  linkcolor=blue,
  citecolor=blue,
  urlcolor=blue
}

\newtheorem{theorem}{Theorem}[section]
\newtheorem{proposition}[theorem]{Proposition}
\newtheorem{lemma}[theorem]{Lemma}
\newtheorem{corollary}[theorem]{Corollary}
\newtheorem{definition}[theorem]{Definition}
\newtheorem{remark}[theorem]{Remark}
\newtheorem{example}[theorem]{Example}

\newcommand{\OK}{\mathcal O_K}
\newcommand{\OKS}{\mathcal O_{K,\Sigma}}
\newcommand{\Norm}{\mathrm N}
\newcommand{\RePart}{\operatorname{Re}}

\title{Dirichlet Series and Stirling-Type Asymptotics\\
for Generalized Legendre Factorials}
\author{Brian Diaz \and Pascal Normanyo}
\date{}

\begin{document}

\maketitle

\begin{abstract}
Let \(K\) be a fixed number field, let \(\Sigma\) be a finite set of
nonzero prime ideals of \(\mathcal O_K\), and let \(f\) be a positive
integer-valued function on the prime ideals outside \(\Sigma\).
We study the ideal-valued factorial defined by
\[
v_{\mathfrak p}(n!_{K,f,\Sigma})
=
\sum_{k\ge 0}
\left\lfloor
\frac{n}{f(\mathfrak p)(\Norm\mathfrak p)^k}
\right\rfloor .
\]
Assume that
\[
f(\mathfrak p)
=
c\,\Norm\mathfrak p
+
O\!\left((\Norm\mathfrak p)^{1-\delta}\right)
\qquad (c>0,\ \delta>0).
\]
We derive a Dirichlet series for the logarithmic increments and compare
its local prime-power sequence with the ordinary prime-ideal von
Mangoldt sequence.  A prime-ideal theorem and a Dirichlet hyperbola
argument then give
\[
\log \Norm_\Sigma(n!_{K,f,\Sigma})
=
\frac1c\,n\log n
+
C_{K,f,\Sigma}n
+
O\!\left(
n e^{-a\sqrt{\log n}}
\right)
\]
for some \(a>0\).  The linear coefficient is explicit:
\[
C_{K,f,\Sigma}
=
\frac{\gamma+\kappa_{K,\Sigma}-\log c-1}{c}
+
J_{K,f,\Sigma}(1).
\]
Here \(\kappa_{K,\Sigma}\) is the constant term of
\(-\zeta'_{K,\Sigma}/\zeta_{K,\Sigma}\) at \(s=1\), and
\(J_{K,f,\Sigma}\) is an absolutely convergent prime-ideal correction
near \(s=1\).  The method applies to factorial ideals of Legendre
subsets whose local class numbers have the corresponding geometric
form.
\end{abstract}

\section{Introduction}

For a rational prime \(p\), Legendre's formula is
\[
v_p(n!)
=
\sum_{r\ge 1}
\left\lfloor\frac{n}{p^r}\right\rfloor.
\]
This formula motivates generalized factorials whose local valuations
are defined by related floor sums.  Such constructions occur in the
theory of Bhargava factorials and in the study of Legendre subsets of
discrete valuation rings; see \cite{Bhargava,EvrardFares}.  In earlier
work, Diaz studied asymptotics for a class of Legendre formulas over
the rational primes \cite{DiazLegendre}.

Let \(K\) be a number field, let \(\Sigma\) be a finite set of nonzero
prime ideals of \(\OK\), and write
\[
\OKS
=
\{x\in K:
v_{\mathfrak p}(x)\ge 0
\text{ for every }\mathfrak p\notin\Sigma\}.
\]
For an admissible function \(f\), we define a factorial ideal by
\[
v_{\mathfrak p}(n!_{K,f,\Sigma})
=
\sum_{k\ge 0}
\left\lfloor
\frac{n}{f(\mathfrak p)(\Norm\mathfrak p)^k}
\right\rfloor .
\]
The ordinary factorial corresponds to
\(K=\mathbf Q\), \(\Sigma=\varnothing\), and \(f(p)=p\).

The logarithmic increment has Dirichlet series
\[
D_{K,f,\Sigma}(s)
=
\zeta(s)H_{K,f,\Sigma}(s),
\]
where
\[
H_{K,f,\Sigma}(s)
=
\sum_{\mathfrak p\notin\Sigma}
\frac{\log\Norm\mathfrak p}
{f(\mathfrak p)^s
 (1-(\Norm\mathfrak p)^{-s})}.
\]
If
\[
f(\mathfrak p)
=
c\,\Norm\mathfrak p
+
O\!\left((\Norm\mathfrak p)^{1-\delta}\right),
\]
then
\[
H_{K,f,\Sigma}(s)
=
c^{-s}
\left(
-\frac{\zeta'_{K,\Sigma}}{\zeta_{K,\Sigma}}(s)
\right)
+
J_{K,f,\Sigma}(s),
\]
with \(J_{K,f,\Sigma}\) holomorphic in a fixed half-plane containing
\(s=1\).  This gives the correct double pole of \(D_{K,f,\Sigma}\), but
a Laurent expansion alone does not justify a secondary asymptotic
term; counterexamples in Tauberian theory show that quantitative
information is essential \cite{PierceTauberian}.

The additional arithmetic input used here is a prime-ideal theorem for
the generalized local sequence
\[
f(\mathfrak p)(\Norm\mathfrak p)^k.
\]
We prove that its generalized Chebyshev function differs from the
ordinary prime-ideal Chebyshev function only by a power-saving term.
A Dirichlet hyperbola decomposition then converts that one-dimensional
prime-ideal estimate into the desired two-term Stirling formula.

\begin{theorem}\label{thm:main}
Let \(K\) be a fixed number field, let \(\Sigma\) be a finite set of
nonzero prime ideals of \(\OK\), and let
\[
f:\{\mathfrak p:\mathfrak p\notin\Sigma\}
\longrightarrow \mathbf Z_{\ge 1}
\]
satisfy
\[
f(\mathfrak p)
=
c\,\Norm\mathfrak p
+
O\!\left((\Norm\mathfrak p)^{1-\delta}\right)
\]
for constants \(c>0\) and \(\delta>0\).  Then there is a constant
\(a=a(K,\Sigma,f)>0\) such that
\[
\log\Norm_\Sigma(n!_{K,f,\Sigma})
=
\frac1c\,n\log n
+
C_{K,f,\Sigma}n
+
O_{K,\Sigma,f}\!\left(
n e^{-a\sqrt{\log n}}
\right),
\]
where
\[
C_{K,f,\Sigma}
=
\frac{\gamma+\kappa_{K,\Sigma}-\log c-1}{c}
+
J_{K,f,\Sigma}(1).
\]
The constants \(\kappa_{K,\Sigma}\) and
\(J_{K,f,\Sigma}(1)\) are defined in
Sections~\ref{sec:zeta-comparison} and \ref{sec:constant}.
\end{theorem}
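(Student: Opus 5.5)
We begin by writing the logarithm as a sum of generalized Chebyshev contributions. Since \(\log\Norm_\Sigma(n!_{K,f,\Sigma})=\sum_{\mathfrak p\notin\Sigma}v_{\mathfrak p}(n!_{K,f,\Sigma})\log\Norm\mathfrak p\), exchanging sums gives
\[
\log\Norm_\Sigma(n!_{K,f,\Sigma})=\sum_{m\le n}\Lambda_f(m)\left\lfloor\frac nm\right\rfloor
=\sum_{dm\le n}\Lambda_f(m),
\]
where \(\Lambda_f(m)=\sum_{\mathfrak p,k:\ f(\mathfrak p)(\Norm\mathfrak p)^k=m}\log\Norm\mathfrak p\). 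Its Dirichlet series is \(H_{K,f,\Sigma}(s)\), so the increments have series \(\zeta(s)H_{K,f,\Sigma}(s)\). The key arithmetic input is a prime-ideal theorem for \(\psi_f(x)=\sum_{m\le x}\Lambda_f(m)\):
\[
\psi_f(x)=\frac xc+O\!\left(xe^{-a\sqrt{\log x}}\right).
\]
To prove this, we compare with \(\psi_{K,\Sigma}(y)=\sum_{\Norm\mathfrak a\le y}\Lambda_{K,\Sigma}(\mathfrak a)=y+O(ye^{-a'\sqrt{\log y}})\), which is the Landau prime ideal theorem; removing \(\Sigma\) costs only \(O(\log y)\). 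Terms with \(k\ge1\) satisfy \(m\asymp(\Norm\mathfrak p)^{k+1}\), so they match the prime powers \(\mathfrak p^{k+1}\) of \(\psi_{K,\Sigma}(x/c)\) up to \(O(\sqrt x\log x)\). The \(k=0\) terms count \(\mathfrak p\) with \(f(\mathfrak p)\le x\), which means \(\Norm\mathfrak p\le x/c+O(x^{1-\delta})\). Consequently \(\psi_f(x)=\psi_{K,\Sigma}(x/c)+O(\text{primes in a window of length }x^{1-\delta}\text{ at }x/c)\). That window count is bounded by the difference of \(\psi_{K,\Sigma}\) at the two ends, which is \(O(x^{1-\delta}+xe^{-a'\sqrt{\log x}})\). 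This window estimate is where the power saving enters and is the step needing care, since we must not need short-interval prime ideal theorems. The trivial difference bound suffices because the error is already of size \(xe^{-a\sqrt{\log x}}\).

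Next we apply the hyperbola method with \(u=\sqrt n\):
\[
\sum_{dm\le n}\Lambda_f(m)=\sum_{d\le u}\psi_f(n/d)+\sum_{m\le u}\Lambda_f(m)\lfloor n/m\rfloor-\lfloor u\rfloor\psi_f(u).
\]
In the first sum we insert the prime-ideal theorem. The main part is \(\frac nc\sum_{d\le u}1/d=\frac nc(\log u+\gamma)+O(\sqrt n)\), and the error is \(\sum_{d\le u}(n/d)e^{-a\sqrt{\log(n/d)}}\ll ne^{-a''\sqrt{\log n}}\log n\); after shrinking \(a\) this is absorbed. In the second sum we write \(\lfloor n/m\rfloor=n/m+O(1)\), giving \(n\sum_{m\le u}\Lambda_f(m)/m+O(\psi_f(u))\). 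Partial summation with the prime-ideal theorem gives
\[
\sum_{m\le u}\frac{\Lambda_f(m)}m=\frac1c\log u+B_f+O\!\left(e^{-a\sqrt{\log u}}\right),
\]
where \(B_f=\lim_{s\to1^+}\bigl(H_{K,f,\Sigma}(s)-\tfrac{1}{c(s-1)}\bigr)\) is the constant term at \(s=1\). The third term is \(O(\sqrt n)\). Adding the pieces yields \(\frac1c n\log n+n(\gamma/c+B_f)+O(ne^{-a\sqrt{\log n}})\).

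Finally we identify the constant. The decomposition \(H=c^{-s}(-\zeta'_{K,\Sigma}/\zeta_{K,\Sigma})+J\), combined with \(-\zeta'_{K,\Sigma}/\zeta_{K,\Sigma}(s)=\frac1{s-1}+\kappa_{K,\Sigma}+O(s-1)\) and \(c^{-s}=c^{-1}(1-(s-1)\log c+\cdots)\), gives \(B_f=(\kappa_{K,\Sigma}-\log c)/c+J_{K,f,\Sigma}(1)\). This matches \(C_{K,f,\Sigma}\) except for the \(-1/c\) term. We would therefore recheck the normalization of the hyperbola bookkeeping, in particular the Mertens-type partial summation \(\int_1^u d\psi_f(t)/t\), which contributes \(\frac1c\log u+(\text{const})-\frac1c\cdot\)(boundary) terms. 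The \(-1/c\) should arise there, or from the paper's convention that \(\log\Norm_\Sigma\) equals \(\sum_{m\le n}\log\) of the increment. We expect this constant matching, rather than any analytic estimate, to require the most careful verification.
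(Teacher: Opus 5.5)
You follow the paper's route: compare $\psi_f(x)$ with $\psi_{K,\Sigma}(x/c)$, prove a weighted Mertens formula whose constant is the constant term $B_f$ of $H_{K,f,\Sigma}$ at $s=1$, apply the hyperbola method at $\sqrt n$, and identify $B_f=(\kappa_{K,\Sigma}-\log c)/c+J_{K,f,\Sigma}(1)$ from the Laurent expansion. Bounding the $k=0$ window by the prime ideal theorem at both endpoints, instead of using the paper's Lemma~\ref{lem:interval}, is a harmless variant: you lose the power saving but still get $O(xe^{-a\sqrt{\log x}})$.

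The gap is in the hyperbola bookkeeping. You assert that the third term $\lfloor u\rfloor\psi_f(u)$ is $O(\sqrt n)$, and this is false. With $u=\sqrt n$, your own prime ideal theorem gives
\[
\lfloor u\rfloor\,\psi_f(u)=\lfloor \sqrt n\rfloor\Bigl(\frac{\sqrt n}{c}+O\bigl(\sqrt n\,e^{-a\sqrt{\log\sqrt n}}\bigr)\Bigr)=\frac nc+O\!\left(ne^{-a'\sqrt{\log n}}\right).
\]
What is $O(\sqrt n)$ is the separate error $O(\psi_f(u))$ from writing $\lfloor n/m\rfloor=n/m+O(1)$ in the second sum. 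Subtracting $n/c$ changes your linear coefficient from $\gamma/c+B_f$ to $\gamma/c+B_f-1/c=C_{K,f,\Sigma}$. This is the term $-M\Psi(Y)=-ax+\cdots$ in the paper's Proposition~\ref{prop:hyperbola}.

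As written, your proposal therefore proves a wrong formula and leaves the discrepancy open. Take $K=\mathbf Q$, $\Sigma=\varnothing$, $f(p)=p$. Then $\Lambda_f=\Lambda$, $c=1$ and $B_f=\kappa_{\mathbf Q}=-\gamma$, so your coefficient $\gamma/c+B_f=0$ gives $\log n!=n\log n+o(n)$, which contradicts Stirling.

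The places you propose to search are also not where the $-1/c$ comes from:
\begin{itemize}
\item The Mertens step is correct. Partial summation gives $\sum_{m\le u}\Lambda_f(m)/m=\frac1c\log u+\frac1c+\int_1^\infty E_f(t)t^{-2}\,dt+o(1)$, where $E_f(t)=\psi_f(t)-t/c$. The Mellin form $H_{K,f,\Sigma}(s)=s\int_1^\infty\psi_f(t)t^{-s-1}\,dt=\frac1{c(s-1)}+\frac1c+s\int_1^\infty E_f(t)t^{-s-1}\,dt$ shows the boundary $\frac1c$ is already part of $B_f$. So the Mertens constant is exactly $B_f$, as in Proposition~\ref{prop:mertens}.
\item No normalization convention enters, because your identity $\log\Norm_\Sigma(n!_{K,f,\Sigma})=\sum_{dm\le n}\Lambda_f(m)$ is exact.
\end{itemize}
Once the third hyperbola term is evaluated correctly, your argument closes and agrees with the paper's.
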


Because \(K\) is fixed, a possible exceptional real zero of
\(\zeta_K\) is absorbed into the displayed error term.  A statement
uniform over varying number fields would require the exceptional-zero
contribution to be tracked separately.

\section{Arithmetic setup and factorial ideals}

Throughout, \(\Norm\mathfrak p=|\OK/\mathfrak p|\).  Every nonzero
integral ideal \(\mathfrak a\) of \(\OKS\) has a unique factorization
\[
\mathfrak a
=
\prod_{\mathfrak p\notin\Sigma}
(\mathfrak p\OKS)^{v_{\mathfrak p}(\mathfrak a)}
\]
with finite support.  Define
\[
\Norm_\Sigma(\mathfrak a)
=
\prod_{\mathfrak p\notin\Sigma}
(\Norm\mathfrak p)^{v_{\mathfrak p}(\mathfrak a)}.
\]

\begin{definition}
The function \(f\) is \emph{admissible} if there exist \(c>0\) and
\(\delta>0\) such that
\[
f(\mathfrak p)
=
c\,\Norm\mathfrak p
+
O\!\left((\Norm\mathfrak p)^{1-\delta}\right).
\]
\end{definition}

Admissibility implies \(f(\mathfrak p)\gg\Norm\mathfrak p\) outside a
finite set.  Hence, for fixed \(n\), only finitely many prime ideals
contribute to the following definition.

\begin{definition}
Set \(0!_{K,f,\Sigma}=\OKS\).  For \(n\ge 1\), define the integral ideal
\(n!_{K,f,\Sigma}\) by
\[
v_{\mathfrak p}(n!_{K,f,\Sigma})
=
\sum_{k\ge 0}
\left\lfloor
\frac{n}{f(\mathfrak p)(\Norm\mathfrak p)^k}
\right\rfloor
\qquad(\mathfrak p\notin\Sigma).
\]
\end{definition}

Define
\[
B_{K,f,\Sigma}(n)
=
\log\Norm_\Sigma(n!_{K,f,\Sigma})
-
\log\Norm_\Sigma((n-1)!_{K,f,\Sigma}).
\]

\begin{proposition}\label{prop:increment}
For \(n\ge1\),
\[
B_{K,f,\Sigma}(n)
=
\sum_{\mathfrak p\notin\Sigma}
\log\Norm\mathfrak p
\sum_{k\ge0}
\mathbf 1_{
f(\mathfrak p)(\Norm\mathfrak p)^k\mid n}.
\]
In particular, \(B_{K,f,\Sigma}(n)\ge0\), and
\[
\log\Norm_\Sigma(n!_{K,f,\Sigma})
=
\sum_{m\le n}B_{K,f,\Sigma}(m).
\]
\end{proposition}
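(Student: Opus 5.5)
The proof is a direct computation from the definition, using the elementary identity $\lfloor n/m\rfloor-\lfloor (n-1)/m\rfloor=\mathbf 1_{m\mid n}$ for positive integers $m$ and $n\ge1$. To see this identity, write $n-1=qm+r$ with $0\le r<m$. Then $\lfloor n/m\rfloor$ equals $q+1$ exactly when $r=m-1$, that is, when $m\mid n$, and equals $q$ otherwise. We apply it with $m=f(\mathfrak p)(\Norm\mathfrak p)^k$, which is a positive integer because $f$ takes values in $\mathbf Z_{\ge1}$ and $\Norm\mathfrak p\ge2$.

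First, for fixed $n\ge1$ and $\mathfrak p\notin\Sigma$, we subtract the two defining floor sums termwise:
\[
v_{\mathfrak p}(n!_{K,f,\Sigma})-v_{\mathfrak p}((n-1)!_{K,f,\Sigma})
=\sum_{k\ge0}\mathbf 1_{f(\mathfrak p)(\Norm\mathfrak p)^k\mid n}.
\]
Termwise subtraction is legitimate because only finitely many $k$ give nonzero terms: once $f(\mathfrak p)(\Norm\mathfrak p)^k>n$, both floors vanish. For $n=1$ the same formula holds with $0!_{K,f,\Sigma}=\OKS$, since every floor $\lfloor 0/m\rfloor$ is $0$, so the $n=0$ convention agrees with the general floor-sum formula.

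Next, by the definition of $\Norm_\Sigma$, we have $\log\Norm_\Sigma(\mathfrak a)=\sum_{\mathfrak p\notin\Sigma}v_{\mathfrak p}(\mathfrak a)\log\Norm\mathfrak p$. We multiply the identity above by $\log\Norm\mathfrak p$ and sum over $\mathfrak p\notin\Sigma$. Only finitely many $\mathfrak p$ have nonzero valuation in either ideal, since $f(\mathfrak p)\le n$ forces $\Norm\mathfrak p$ to be bounded by admissibility, as noted before the definition. The sums are therefore finite and may be rearranged, which gives the stated formula for $B_{K,f,\Sigma}(n)$.

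Nonnegativity of $B_{K,f,\Sigma}(n)$ is immediate, since every summand is a product of $\log\Norm\mathfrak p>0$ and an indicator. The summation formula follows by telescoping $\sum_{m=1}^{n}B_{K,f,\Sigma}(m)$ and using $\log\Norm_\Sigma(\OKS)=0$.

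There is no real obstacle. The only points needing care are the finiteness of the supports, which justifies the exchange of sums, and the consistency of the $n=1$ case with the convention for $0!_{K,f,\Sigma}$.
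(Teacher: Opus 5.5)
Your proof is correct and follows the same route as the paper: the identity $\lfloor n/d\rfloor-\lfloor (n-1)/d\rfloor=\mathbf 1_{d\mid n}$ applied termwise, then weighting by $\log\Norm\mathfrak p$, summing over $\mathfrak p\notin\Sigma$, and telescoping. You additionally spell out the finiteness of the supports, the $n=1$ case against the convention $0!_{K,f,\Sigma}=\OKS$, and $\log\Norm_\Sigma(\OKS)=0$, which the paper leaves implicit.
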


\begin{proof}
Use
\[
\left\lfloor\frac{n}{d}\right\rfloor
-
\left\lfloor\frac{n-1}{d}\right\rfloor
=
\mathbf 1_{d\mid n},
\]
then sum the valuation differences with weights
\(\log\Norm\mathfrak p\).
\end{proof}

\section{The increment Dirichlet series}

Define a nonnegative arithmetic function
\[
h_{K,f,\Sigma}(m)
=
\sum_{\substack{\mathfrak p\notin\Sigma,\ k\ge0\\
f(\mathfrak p)(\Norm\mathfrak p)^k=m}}
\log\Norm\mathfrak p.
\]
Then Proposition~\ref{prop:increment} says
\[
B_{K,f,\Sigma}=\mathbf 1*h_{K,f,\Sigma}
\]
as a Dirichlet convolution.

\begin{proposition}\label{prop:dirichlet}
For \(\RePart(s)>1\),
\[
H_{K,f,\Sigma}(s)
:=
\sum_{m\ge1}
\frac{h_{K,f,\Sigma}(m)}{m^s}
=
\sum_{\mathfrak p\notin\Sigma}
\frac{\log\Norm\mathfrak p}
{f(\mathfrak p)^s
 (1-(\Norm\mathfrak p)^{-s})},
\]
and
\[
D_{K,f,\Sigma}(s)
:=
\sum_{n\ge1}
\frac{B_{K,f,\Sigma}(n)}{n^s}
=
\zeta(s)H_{K,f,\Sigma}(s).
\]
All series converge absolutely in this half-plane.
\end{proposition}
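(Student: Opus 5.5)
We prove absolute convergence of the double sum over pairs \((\mathfrak p,k)\) directly, and then read off both identities by rearranging nonnegative terms. Throughout write \(\sigma=\RePart(s)>1\).

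\textbf{Step 1: absolute convergence of \(H_{K,f,\Sigma}\).} By definition of \(h_{K,f,\Sigma}\),
\[
\sum_{m\ge1}\frac{h_{K,f,\Sigma}(m)}{m^{\sigma}}
=
\sum_{\mathfrak p\notin\Sigma}\sum_{k\ge0}
\frac{\log\Norm\mathfrak p}{f(\mathfrak p)^{\sigma}(\Norm\mathfrak p)^{k\sigma}}.
\]
This holds because the pairs \((\mathfrak p,k)\) are partitioned according to the value \(m=f(\mathfrak p)(\Norm\mathfrak p)^k\), and the terms are nonnegative, so Tonelli permits the regrouping. The inner geometric sum equals \(f(\mathfrak p)^{-\sigma}(1-(\Norm\mathfrak p)^{-\sigma})^{-1}\), and \((1-(\Norm\mathfrak p)^{-\sigma})^{-1}\le 2\).

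Admissibility gives a constant \(c'>0\) with \(f(\mathfrak p)\ge c'\Norm\mathfrak p\) for all but finitely many \(\mathfrak p\). On the finite exceptional set we still have \(f(\mathfrak p)\ge1\), so those terms are finite. The remaining sum is therefore bounded by a constant times
\[
\sum_{\mathfrak p}\frac{\log\Norm\mathfrak p}{(\Norm\mathfrak p)^{\sigma}}.
\]
This sum is finite for \(\sigma>1\): at most \([K:\mathbf Q]\) prime ideals lie above each rational prime \(p\), and each has \(\Norm\mathfrak p\ge p\). Hence it is bounded by \([K:\mathbf Q]\sum_p (\log p)\,p^{-\sigma}<\infty\).

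\textbf{Step 2: the closed form for \(H_{K,f,\Sigma}\).} Absolute convergence from Step 1 allows the same regrouping with complex \(s\). Summing the geometric series \(\sum_{k\ge0}(\Norm\mathfrak p)^{-ks}\), which converges since \(|(\Norm\mathfrak p)^{-s}|<1\), gives the stated formula.

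\textbf{Step 3: the factorization \(D_{K,f,\Sigma}=\zeta H_{K,f,\Sigma}\).} Proposition~\ref{prop:increment} gives \(B_{K,f,\Sigma}=\mathbf 1*h_{K,f,\Sigma}\). Both \(\sum_n n^{-s}\) and \(H_{K,f,\Sigma}(s)\) converge absolutely for \(\sigma>1\), so their Cauchy (Dirichlet) product converges absolutely and equals the Dirichlet series of the convolution. Since \(B_{K,f,\Sigma}\ge0\), this also shows directly that \(\sum_n B_{K,f,\Sigma}(n)n^{-\sigma}=\zeta(\sigma)H_{K,f,\Sigma}(\sigma)<\infty\), which is absolute convergence of \(D_{K,f,\Sigma}\).

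The only step needing real care is the lower bound \(f(\mathfrak p)\gg\Norm\mathfrak p\) together with the prime-ideal counting bound. Everything else is routine manipulation of nonnegative series.
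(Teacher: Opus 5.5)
Your proof is correct and follows essentially the same route as the paper. Both arguments use the lower bound \(f(\mathfrak p)\gg\Norm\mathfrak p\) off a finite set, compare with the prime-ideal series \(\sum_{\mathfrak p}\log\Norm\mathfrak p\,(\Norm\mathfrak p)^{-\sigma}\) (which you check by hand rather than quoting the series for \(-\zeta'_{K,\Sigma}/\zeta_{K,\Sigma}\)), sum the geometric progression in \(k\), and take the Dirichlet product coming from \(B_{K,f,\Sigma}=\mathbf 1*h_{K,f,\Sigma}\).

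One small slip: \(\Norm\mathfrak p\ge p\) only controls the denominator. To justify your final bound you should also note either \(\log\Norm\mathfrak p\le [K:\mathbf Q]\log p\), which costs one more factor \([K:\mathbf Q]\), or \(\log(p^{j})\,p^{-j\sigma}\le \log p\,p^{-\sigma}\) for \(j\ge1\).
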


\begin{proof}
Admissibility gives \(f(\mathfrak p)\gg\Norm\mathfrak p\), apart from
finitely many primes.  Absolute convergence follows from the standard
prime-ideal series for
\(-\zeta'_{K,\Sigma}/\zeta_{K,\Sigma}\).  The displayed identities
follow by grouping the geometric progression in \(k\) and using the
Dirichlet-convolution identity from
Proposition~\ref{prop:increment}.
\end{proof}

\section{Comparison with the Dedekind zeta function}
\label{sec:zeta-comparison}

The partial Dedekind zeta function is
\[
\zeta_{K,\Sigma}(s)
=
\prod_{\mathfrak p\notin\Sigma}
(1-(\Norm\mathfrak p)^{-s})^{-1}.
\]
For \(\RePart(s)>1\),
\[
-\frac{\zeta'_{K,\Sigma}}{\zeta_{K,\Sigma}}(s)
=
\sum_{\mathfrak p\notin\Sigma}
\frac{\log\Norm\mathfrak p}
{(\Norm\mathfrak p)^s
 (1-(\Norm\mathfrak p)^{-s})}.
\]
Near \(s=1\), write
\[
-\frac{\zeta'_{K,\Sigma}}{\zeta_{K,\Sigma}}(s)
=
\frac1{s-1}
+
\kappa_{K,\Sigma}
+
O(s-1).
\]

Put
\[
\eta=\min\{\delta,\tfrac12\}.
\]

\begin{proposition}\label{prop:J}
The function
\[
J_{K,f,\Sigma}(s)
=
H_{K,f,\Sigma}(s)
-
c^{-s}
\left(
-\frac{\zeta'_{K,\Sigma}}{\zeta_{K,\Sigma}}(s)
\right)
\]
is holomorphic for
\[
\RePart(s)>1-\eta.
\]
In particular,
\[
H_{K,f,\Sigma}(s)
=
\frac{1/c}{s-1}
+
\beta_{K,f,\Sigma}
+
O(s-1)
\]
near \(s=1\), where
\[
\beta_{K,f,\Sigma}
=
\frac{\kappa_{K,\Sigma}-\log c}{c}
+
J_{K,f,\Sigma}(1).
\]
\end{proposition}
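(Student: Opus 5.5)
We compare the two prime-ideal series term by term. For \(\RePart(s)>1\), Proposition~\ref{prop:dirichlet} and the displayed formula for \(-\zeta'_{K,\Sigma}/\zeta_{K,\Sigma}\) give
\[
J_{K,f,\Sigma}(s)
=
\sum_{\mathfrak p\notin\Sigma}
\frac{\log\Norm\mathfrak p}{1-(\Norm\mathfrak p)^{-s}}
\Bigl(f(\mathfrak p)^{-s}-(c\,\Norm\mathfrak p)^{-s}\Bigr).
\]
We show that this series converges absolutely and locally uniformly on \(\RePart(s)>1-\eta\). Since each term is holomorphic there, Weierstrass' theorem will then give holomorphy of the sum. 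It agrees with \(J_{K,f,\Sigma}\) on \(\RePart(s)>1\), so it provides the continuation.

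\textbf{Each term.} Write \(f(\mathfrak p)=c\,\Norm\mathfrak p\,(1+\epsilon_{\mathfrak p})\) with \(\epsilon_{\mathfrak p}=O((\Norm\mathfrak p)^{-\delta})\). Away from finitely many \(\mathfrak p\) we have \(|\epsilon_{\mathfrak p}|\le\tfrac12\). Each excluded \(\mathfrak p\) contributes a single term that is entire, because \(f(\mathfrak p)\ge1\) and \(c\,\Norm\mathfrak p>0\) are positive reals, and the factor \((1-(\Norm\mathfrak p)^{-s})^{-1}\) is holomorphic for \(\RePart(s)>0\). For \(s\) in a compact set \(S\subset\{\RePart(s)>1-\eta\}\), say with \(\sigma=\RePart(s)\ge\sigma_0>1-\eta\) and \(|s|\le T\), the mean value theorem applied to \(t\mapsto e^{-st}\) gives
\[
\bigl|(1+\epsilon)^{-s}-1\bigr|\ll_T|\epsilon|.
\]
Hence
\[
\bigl|f(\mathfrak p)^{-s}-(c\,\Norm\mathfrak p)^{-s}\bigr|
\ll_{S}(\Norm\mathfrak p)^{-\sigma-\delta}.
\]
Also \(|1-(\Norm\mathfrak p)^{-s}|\ge1-2^{-\sigma_0}>0\), since \(\sigma_0>1-\eta\ge\tfrac12\). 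This is where \(\eta\le\tfrac12\) is used: it keeps the local Euler factors bounded away from zero.

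\textbf{Summing over primes.} Each term is therefore \(\ll(\log\Norm\mathfrak p)(\Norm\mathfrak p)^{-\sigma_0-\delta}\). Because \(\sigma_0+\delta>1-\eta+\delta\ge1\), the sum over \(\mathfrak p\) converges. Indeed, at most \([K:\mathbf Q]\) prime ideals lie over each rational prime \(p\), and \(\Norm\mathfrak p\ge p\), so the sum is dominated by \([K:\mathbf Q]\sum_p(\log p)\,p^{-\sigma_0-\delta}<\infty\). This is the main quantitative step. The only subtlety is that \(\eta\le\delta\) must be imposed so that the exponent strictly exceeds \(1\) uniformly on \(S\).

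\textbf{Laurent expansion.} The function \(J_{K,f,\Sigma}\) is holomorphic at \(1\), and \(c^{-s}=c^{-1}\bigl(1-(s-1)\log c+O((s-1)^2)\bigr)\). Multiplying this by \(\frac1{s-1}+\kappa_{K,\Sigma}+O(s-1)\) gives
\[
\frac{1/c}{s-1}+\frac{\kappa_{K,\Sigma}-\log c}{c}+O(s-1).
\]
Adding \(J_{K,f,\Sigma}(1)+O(s-1)\) yields the stated expansion of \(H_{K,f,\Sigma}\) with the given \(\beta_{K,f,\Sigma}\). Here we use the simple pole of \(\zeta_{K,\Sigma}\) at \(s=1\) with residue \(1\) for \(-\zeta'/\zeta\); removing finitely many Euler factors does not change this.
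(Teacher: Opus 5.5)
Your proposal is correct and follows the paper's proof essentially step for step. It uses the same term-by-term comparison via \(f(\mathfrak p)=c\,\Norm\mathfrak p\,(1+\varepsilon_{\mathfrak p})\), the bound \((1+\varepsilon_{\mathfrak p})^{-s}-1=O((\Norm\mathfrak p)^{-\delta})\) with uniformly bounded Euler factors, locally uniform convergence from \(\RePart(s)+\delta>1\), and the same Laurent multiplication. There are two harmless slips: the finitely many excluded terms are holomorphic on \(\RePart(s)>0\) rather than entire, and keeping \(|1-(\Norm\mathfrak p)^{-s}|\) away from zero needs only \(\sigma_0>0\), so that step does not actually rely on \(\eta\le\tfrac12\).
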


\begin{proof}
Write \(q=\Norm\mathfrak p\) and
\[
f(\mathfrak p)=cq(1+\varepsilon_{\mathfrak p}),
\qquad
\varepsilon_{\mathfrak p}=O(q^{-\delta}).
\]
On a compact subset of \(\RePart(s)>1-\eta\), the factors
\((1-q^{-s})^{-1}\) are uniformly bounded, and
\[
(1+\varepsilon_{\mathfrak p})^{-s}-1
=
O(q^{-\delta})
\]
uniformly.  Hence the \(\mathfrak p\)-th difference is
\[
O\!\left(
\frac{\log q}{q^{\RePart(s)+\delta}}
\right).
\]
The resulting prime-ideal series converges locally uniformly because
\(\RePart(s)+\delta>1\).  This proves holomorphy.  The Laurent
coefficient follows by multiplying
\[
c^{-s}
=
\frac1c
\left(
1-(s-1)\log c+O((s-1)^2)
\right)
\]
with the Laurent expansion of the logarithmic derivative.
\end{proof}

\section{A deformed prime-ideal theorem}

Define
\[
\Psi_{K,f,\Sigma}(x)
=
\sum_{m\le x}h_{K,f,\Sigma}(m)
=
\sum_{\mathfrak p\notin\Sigma}
\log\Norm\mathfrak p
\sum_{\substack{k\ge0\\
f(\mathfrak p)(\Norm\mathfrak p)^k\le x}}
1.
\]
Also define the partial prime-ideal Chebyshev function
\[
\psi_{K,\Sigma}(x)
=
\sum_{\substack{\mathfrak p\notin\Sigma,\ r\ge1\\
(\Norm\mathfrak p)^r\le x}}
\log\Norm\mathfrak p.
\]

For fixed \(K\) and \(\Sigma\), the prime ideal theorem gives
\[
\psi_{K,\Sigma}(x)
=
x+
O_{K,\Sigma}\!\left(
x e^{-a_0\sqrt{\log x}}
\right)
\qquad(x\ge3)
\]
for some \(a_0>0\); this follows, for example, from the effective
Chebotarev theorem of Lagarias and Odlyzko
\cite{LagariasOdlyzko}.  A possible exceptional zero causes a term
\(x^\beta\), but for fixed \(K\) this is absorbed by the displayed
error after reducing \(a_0\).

\begin{lemma}\label{lem:interval}
Let \(d=[K:\mathbf Q]\).  For \(2\le u<v\),
\[
\sum_{\substack{\mathfrak p\\
u<\Norm\mathfrak p\le v}}
\log\Norm\mathfrak p
\ll_K
(v-u+1)\log v.
\]
\end{lemma}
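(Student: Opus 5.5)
The plan is to group prime ideals by the rational prime they lie over. Every prime ideal \(\mathfrak p\) of \(\OK\) lies over a unique rational prime \(p\), and then \(\Norm\mathfrak p=p^{f_{\mathfrak p}}\) with \(1\le f_{\mathfrak p}\le d\). Moreover, at most \(d\) prime ideals lie over a given \(p\), since \(\sum_{\mathfrak p\mid p}e_{\mathfrak p}f_{\mathfrak p}=d\). Grouping therefore gives
\[
\sum_{u<\Norm\mathfrak p\le v}\log\Norm\mathfrak p
\le d\sum_{j=1}^{d}\ \sum_{\substack{p\ \text{prime}\\ u<p^j\le v}} j\log p
\le d^2\log v\cdot\sum_{j=1}^d \#\{m\in\mathbf Z_{\ge1}:u<m^j\le v\}.
\]
Here I bound \(j\log p=\log p^j\le\log v\), and I drop primality, replacing primes by all positive integers \(m\).

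It remains to count the \(j\)-th powers in \((u,v]\).

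For \(j=1\), the count is at most \(v-u+1\).

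For \(j\ge2\), the count is at most \(v^{1/j}-u^{1/j}+1\). By the mean value theorem this is at most \((v-u)\,j^{-1}u^{1/j-1}+1\), and since \(u\ge2\) this gives \(\le v-u+1\).

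Summing over the \(d\) values of \(j\), the bracketed count is at most \(d(v-u+1)\). The whole sum is therefore at most \(d^3(v-u+1)\log v\), which is \(\ll_K (v-u+1)\log v\).

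No step is a real obstacle. The only point needing care is that the bound must hold even for short intervals, where \(v-u<1\). This is why I avoid any appeal to the prime ideal theorem or Brun–Titchmarsh and use only the trivial lattice-point count with the additive \(+1\). That \(+1\) absorbs the single integer that may lie in a short interval, so the estimate is uniform in \(u\) and \(v\), and the implied constant depends only on \(d\).
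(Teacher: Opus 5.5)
Your proof is correct and is essentially the paper's argument. Both rest on the fact that at most \(d\) prime ideals of \(K\) share a given norm, combined with the trivial bound \(\log\Norm\mathfrak p\le\log v\) and the count of at most \(v-u+1\) integers in \((u,v]\). The paper groups directly by the value \(m=\Norm\mathfrak p\in(u,v]\). Your split by residue degree \(j\) with a separate count of \(j\)-th powers is harmless but unnecessary: distinct pairs \((p,j)\) give distinct integers \(p^j\in(u,v]\), so the count collapses at once and your constant drops to \(d\).
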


\begin{proof}
For each positive integer \(m\), at most \(d\) prime ideals of \(K\)
have norm \(m\).  Summing the trivial bound
\(\log\Norm\mathfrak p\le\log v\) proves the claim.
\end{proof}

\begin{proposition}[Deformed prime-ideal theorem]
\label{prop:deformed-pit}
There is \(a_1=a_1(K,\Sigma,f)>0\) such that
\[
\Psi_{K,f,\Sigma}(x)
=
\frac{x}{c}
+
O_{K,\Sigma,f}\!\left(
x e^{-a_1\sqrt{\log x}}
\right)
\qquad(x\ge3).
\]
More precisely, if
\(\eta=\min\{\delta,\tfrac12\}\), then
\[
\Psi_{K,f,\Sigma}(x)
-
\psi_{K,\Sigma}(x/c)
\ll_{K,\Sigma,f}
x^{1-\eta}(\log x)^2.
\]
\end{proposition}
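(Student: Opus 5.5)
The first displayed estimate follows from the second. The prime ideal theorem gives \(\psi_{K,\Sigma}(x/c)=x/c+O(xe^{-a_0\sqrt{\log(x/c)}})\). The power saving \(x^{1-\eta}(\log x)^2\) is absorbed into \(xe^{-a_1\sqrt{\log x}}\) after shrinking \(a_0\) slightly to \(a_1\). So the work is the comparison estimate. To prove it, I will match terms: the pair \((\mathfrak p,k)\) in \(\Psi_{K,f,\Sigma}\) with \(k\ge0\) corresponds to the pair \((\mathfrak p,r)\) with \(r=k+1\) in \(\psi_{K,\Sigma}\). The difference is then
\[
\sum_{\mathfrak p\notin\Sigma}\log\Norm\mathfrak p\sum_{k\ge0}\Bigl(\mathbf 1_{f(\mathfrak p)q^k\le x}-\mathbf 1_{cq^{k+1}\le x}\Bigr),\qquad q=\Norm\mathfrak p .
\]
A term is nonzero only if \(x\) lies between \(f(\mathfrak p)q^k\) and \(cq^{k+1}\). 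By admissibility these differ by \(O(q^{k+1-\delta})\), that is, \(f(\mathfrak p)q^k=cq^{k+1}(1+O(q^{-\delta}))\).

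Split by \(k\). For \(k=0\), the nonzero terms need \(x\) between \(f(\mathfrak p)\) and \(cq\). The window condition then forces \(q\asymp x/c\), and \(|cq-x|\ll q^{1-\delta}\ll x^{1-\delta}\). Thus \(q\) lies in an interval around \(x/c\) of length \(O(x^{1-\delta})\). Lemma~\ref{lem:interval} bounds this contribution by \(\ll (x^{1-\delta}+1)\log x\). For \(k\ge1\), the nonzero terms need \(cq^{k+1}\asymp x\), so \(q\ll x^{1/2}\). I bound these crudely: both indicator sums over \(k\ge1\) are at most the number of prime powers \(q^{r}\le Cx\) with \(r\ge2\), weighted by \(\log q\). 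The number of admissible \(k\) for fixed \(q\) is \(O(\log x)\), and \(\sum_{q\ll x^{1/2}}\log q\ll x^{1/2}\). This gives \(O(x^{1/2}\log x)\). This bound explains the \(\tfrac12\) in \(\eta\); a sharper version would handle each \(k\) by the window argument, but it is not needed.

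Two small points need care. First, the finitely many primes where \(f(\mathfrak p)\) is not comparable to \(q\) contribute \(O(\log x)\), since each contributes \(O(\log x)\) terms. Second, when \(x\) is small relative to the implied constants, I absorb everything into the \(O\)-constant. Combining the pieces gives \(\ll x^{1-\eta}(\log x)^2\), with room to spare.

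The main obstacle is the \(k=0\) window step. I must confirm that the error in \(f\) translates into a \(q\)-interval of length \(O(x^{1-\delta})\) near \(x/c\), uniformly, with \(q\) bounded above and below by constant multiples of \(x\). Once this holds, Lemma~\ref{lem:interval} finishes it. The only subtlety is ensuring the lemma's hypothesis \(u\ge2\) holds and the interval endpoints are \(\asymp x\). This holds for \(x\) large, since \(f(\mathfrak p)\ge c q/2\) for \(q\) large.
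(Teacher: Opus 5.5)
Your proof is correct and follows the paper's argument: pair $(\mathfrak p,k)$ with $(\mathfrak p,k+1)$ in $\psi_{K,\Sigma}(x/c)$, confine the disagreeing indicators to a short window near $(x/c)^{1/r}$ and bound it with Lemma~\ref{lem:interval}, set aside the finite exceptional set, and finish with the prime ideal theorem. The only difference is in the terms with $r\ge2$: the paper applies the window argument to every $r$, getting $O(x^{(1-\eta)/2}(\log x)^2)$, whereas you bound all $r\ge2$ crudely by $O(x^{1/2}\log x)$, and that crude bound is exactly where your argument needs $\eta\le\tfrac12$.
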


\begin{proof}
Write \(q=\Norm\mathfrak p\) and \(r=k+1\).  The model denominator
corresponding to
\(f(\mathfrak p)q^{r-1}\) is \(cq^r\).  Outside a fixed finite set of
prime ideals,
\[
f(\mathfrak p)q^{r-1}
=
cq^r(1+O(q^{-\delta})).
\]
If the two indicators
\[
\mathbf 1_{f(\mathfrak p)q^{r-1}\le x}
\quad\text{and}\quad
\mathbf 1_{cq^r\le x}
\]
differ, then
\[
\left|x-cq^r\right|
\ll q^{r-\delta}.
\]
Put \(Y_r=(x/c)^{1/r}\).  The preceding inequality implies
\[
|q-Y_r|
\ll
Y_r^{1-\eta},
\]
after enlarging the implied constant.  Thus, by
Lemma~\ref{lem:interval}, the total weighted contribution for a fixed
\(r\) is
\[
\ll_K
\bigl(Y_r^{1-\eta}+1\bigr)\log x.
\]

Only \(O(\log x)\) values of \(r\) occur.  The term \(r=1\) contributes
\(O(x^{1-\eta}\log x)\).  For \(r\ge2\), one has
\(Y_r\ll x^{1/2}\), so the total contribution of these \(r\) is
\[
O\!\left(
x^{(1-\eta)/2}(\log x)^2
+
(\log x)^2
\right),
\]
which is dominated by
\(O(x^{1-\eta}(\log x)^2)\).  The fixed exceptional set of prime ideals
contributes only \(O(\log x)\).  Therefore
\[
\Psi_{K,f,\Sigma}(x)
=
\psi_{K,\Sigma}(x/c)
+
O\!\left(
x^{1-\eta}(\log x)^2
\right).
\]
Combining this with the prime ideal theorem and reducing the
exponential constant proves the first assertion.
\end{proof}

\section{The secondary constant}
\label{sec:constant}

\begin{proposition}[Weighted Mertens formula]
\label{prop:mertens}
There is \(a_2>0\) such that
\[
\sum_{m\le y}
\frac{h_{K,f,\Sigma}(m)}{m}
=
\frac1c\log y
+
\beta_{K,f,\Sigma}
+
O_{K,\Sigma,f}\!\left(
e^{-a_2\sqrt{\log y}}
\right),
\]
where
\[
\beta_{K,f,\Sigma}
=
\frac{\kappa_{K,\Sigma}-\log c}{c}
+
J_{K,f,\Sigma}(1).
\]
\end{proposition}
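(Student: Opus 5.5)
Our plan is to obtain the weighted sum from the deformed prime-ideal theorem (Proposition~\ref{prop:deformed-pit}) by partial summation, and then to identify the constant by a Mellin-type comparison with the Laurent expansion of Proposition~\ref{prop:J}. Write \(\Psi=\Psi_{K,f,\Sigma}\) and \(E(x)=\Psi(x)-x/c\), so that \(E(x)\ll x e^{-a_1\sqrt{\log x}}\) for \(x\ge3\) and \(E(x)\ll 1\) for small \(x\). Riemann--Stieltjes integration gives
\[
\sum_{m\le y}\frac{h_{K,f,\Sigma}(m)}{m}
=\frac{\Psi(y)}{y}+\int_1^y\frac{\Psi(t)}{t^2}\,dt
=\frac1c\log y+\frac1c+\int_1^\infty\frac{E(t)}{t^2}\,dt
+\frac{E(y)}{y}-\int_y^\infty\frac{E(t)}{t^2}\,dt .
\]
The integral \(\int_1^\infty E(t)t^{-2}\,dt\) converges absolutely, since \(\int^\infty t^{-1}e^{-a_1\sqrt{\log t}}\,dt<\infty\). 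The tail is handled by the substitution \(u=\sqrt{\log t}\), which gives
\[
\int_y^\infty \frac{e^{-a_1\sqrt{\log t}}}{t}\,dt\ll (1+\sqrt{\log y})\,e^{-a_1\sqrt{\log y}} .
\]
Together with \(E(y)/y\ll e^{-a_1\sqrt{\log y}}\), both error terms are \(O(e^{-a_2\sqrt{\log y}})\) for any \(a_2<a_1\). This establishes the displayed formula with \(\beta:=1/c+\int_1^\infty E(t)t^{-2}\,dt\) in place of \(\beta_{K,f,\Sigma}\).

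It then remains to show \(\beta=\beta_{K,f,\Sigma}\), and we expect this identification to be the main point. For \(\sigma>1\) we have \(H_{K,f,\Sigma}(s)=s\int_1^\infty\Psi(t)t^{-s-1}\,dt\). Substituting \(\Psi(t)=t/c+E(t)\) yields
\[
H_{K,f,\Sigma}(s)=\frac{s}{c(s-1)}+s\int_1^\infty E(t)t^{-s-1}\,dt
=\frac{1/c}{s-1}+\frac1c+s\int_1^\infty E(t)t^{-s-1}\,dt .
\]
The last integral converges absolutely and uniformly for real \(s\in[1,2]\), by the same integrability as above, so by dominated convergence it tends to \(\int_1^\infty E(t)t^{-2}\,dt\) as \(s\to1^+\). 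Hence
\[
\lim_{s\to1^+}\Bigl(H_{K,f,\Sigma}(s)-\frac{1/c}{s-1}\Bigr)=\beta .
\]
Proposition~\ref{prop:J} identifies the same limit as \(\beta_{K,f,\Sigma}\), so \(\beta=\beta_{K,f,\Sigma}\).

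One must check that only the real approach \(s\to1^+\) is used, so no continuation of the integral to the left of \(\RePart(s)=1\) is needed. The only delicate bookkeeping is the behavior of \(\Psi\) for \(t<3\), where the finitely many exceptional primes with small \(f(\mathfrak p)\) live. There \(\Psi\) is bounded, so \(E(t)\ll1\) on \([1,3]\) and the integrals are unaffected.
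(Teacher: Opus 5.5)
Your proposal is correct and follows the paper's proof essentially step for step. You use partial summation with \(\Psi_{K,f,\Sigma}(t)=t/c+E(t)\) and bound the tail of \(\int E(t)t^{-2}\,dt\) via Proposition~\ref{prop:deformed-pit}. You then identify the constant through \(H_{K,f,\Sigma}(s)=s\int_1^\infty\Psi_{K,f,\Sigma}(t)t^{-s-1}\,dt\), letting \(s\to1^+\) and matching with the Laurent expansion of Proposition~\ref{prop:J}. Your explicit tail computation, the dominated-convergence step, and the treatment of small \(t\) just supply details the paper leaves implicit.
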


\begin{proof}
Put
\[
E_f(t)
=
\Psi_{K,f,\Sigma}(t)-\frac{t}{c}.
\]
Proposition~\ref{prop:deformed-pit} gives
\[
E_f(t)
\ll
t e^{-a_1\sqrt{\log t}}.
\]
Partial summation yields
\begin{align*}
\sum_{m\le y}
\frac{h_{K,f,\Sigma}(m)}{m}
&=
\frac{\Psi_{K,f,\Sigma}(y)}{y}
+
\int_1^y
\frac{\Psi_{K,f,\Sigma}(t)}{t^2}\,dt\\
&=
\frac1c\log y
+
\frac1c
+
\frac{E_f(y)}{y}
+
\int_1^y\frac{E_f(t)}{t^2}\,dt.
\end{align*}
The integral
\(\int_1^\infty E_f(t)t^{-2}\,dt\) converges, and its tail is
\(O(e^{-a_2\sqrt{\log y}})\) after reducing \(a_2\).

It remains to identify the constant.  For \(\RePart(s)>1\),
partial summation gives
\[
H_{K,f,\Sigma}(s)
=
s\int_1^\infty
\Psi_{K,f,\Sigma}(t)t^{-s-1}\,dt.
\]
Substituting
\(\Psi_{K,f,\Sigma}(t)=t/c+E_f(t)\) and letting \(s\to1^+\), the
constant term of \(H_{K,f,\Sigma}\) is
\[
\frac1c+\int_1^\infty\frac{E_f(t)}{t^2}\,dt.
\]
By Proposition~\ref{prop:J}, this constant term is precisely
\(\beta_{K,f,\Sigma}\).
\end{proof}

\section{Dirichlet hyperbola and proof of the main theorem}

Let
\[
A_{K,f,\Sigma}(x)
=
\sum_{n\le x}B_{K,f,\Sigma}(n).
\]
Since \(B_{K,f,\Sigma}=\mathbf1*h_{K,f,\Sigma}\),
\[
A_{K,f,\Sigma}(x)
=
\sum_{dm\le x}h_{K,f,\Sigma}(d).
\]

\begin{proposition}\label{prop:hyperbola}
Let
\[
\Psi(x)=ax+O\!\left(xe^{-b\sqrt{\log x}}\right)
\]
be the summatory function of a nonnegative arithmetic function \(h\),
and suppose
\[
\sum_{n\le y}\frac{h(n)}n
=
a\log y+\beta
+
O\!\left(e^{-b'\sqrt{\log y}}\right).
\]
Then, for some \(b''>0\),
\[
\sum_{dm\le x}h(d)
=
ax\log x
+
(\beta+a\gamma-a)x
+
O\!\left(
xe^{-b''\sqrt{\log x}}
\right).
\]
\end{proposition}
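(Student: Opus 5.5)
We apply the Dirichlet hyperbola method with the balanced split \(D=M=\sqrt{x}\). Write \(\Psi(y)=\sum_{n\le y}h(n)\) and \(L(y)=\sum_{n\le y}h(n)/n\). Then
\[
\sum_{dm\le x}h(d)
=
\sum_{d\le\sqrt x}h(d)\left\lfloor\frac xd\right\rfloor
+
\sum_{m\le\sqrt x}\Psi\!\left(\frac xm\right)
-
\left\lfloor\sqrt x\right\rfloor\Psi(\sqrt x).
\]
We treat the three pieces in turn and keep every error at size \(x e^{-b''\sqrt{\log x}}\). Since \(\log\sqrt x=\tfrac12\log x\), all exponential savings survive after shrinking the constants.

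\textbf{First piece.} Write \(\lfloor x/d\rfloor=x/d-\{x/d\}\). The main part is \(xL(\sqrt x)=\tfrac a2 x\log x+\beta x+O(xe^{-b'\sqrt{\log x}/\sqrt2})\). The fractional parts contribute at most \(\Psi(\sqrt x)\ll\sqrt x\), using \(h\ge0\). Since \(\sqrt x\) is far smaller than the target error, this needs no cancellation.

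\textbf{Second piece.} Substitute \(\Psi(y)=ay+E(y)\). The main part is \(ax\sum_{m\le\sqrt x}1/m=ax(\tfrac12\log x+\gamma+O(x^{-1/2}))\). The error is \(\sum_{m\le\sqrt x}E(x/m)\). Each \(x/m\ge\sqrt x\), so \(|E(x/m)|\ll (x/m)e^{-b\sqrt{\log\sqrt x}}\). Summing over \(m\) gives \(\ll x\log x\,e^{-(b/\sqrt2)\sqrt{\log x}}\), and the factor \(\log x\) is absorbed by reducing the exponent constant.

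\textbf{Third piece.} We have \(\lfloor\sqrt x\rfloor\Psi(\sqrt x)=(\sqrt x+O(1))(a\sqrt x+O(\sqrt x e^{-b\sqrt{\log \sqrt x}}))=ax+O(xe^{-(b/\sqrt2)\sqrt{\log x}})\).

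\textbf{Combining.} Adding the three pieces gives
\[
\tfrac a2x\log x+\beta x+\tfrac a2 x\log x+a\gamma x-ax,
\]
which is exactly the asserted main term \(ax\log x+(\beta+a\gamma-a)x\).

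The main point requiring care is the second piece. There the error is \(E\), evaluated at arguments that can be as small as \(\sqrt x\) and summed with weight \(1/m\). We must make sure the crude bound \(\log\log\)-free savings survives, which holds since \(e^{-c\sqrt{\log(x/m)}}\le e^{-c\sqrt{\log x}/\sqrt2}\) uniformly for \(m\le\sqrt x\). Two smaller points also need checking: that \(\Psi\) is only evaluated at arguments \(\ge\sqrt x\ge 3\) where the hypothesis applies (small \(x\) being trivial), and that the nonnegativity of \(h\) is what makes the fractional-part bound in the first piece legitimate.
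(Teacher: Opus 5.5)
Your proof is correct and follows the paper's argument essentially step for step. Both use the hyperbola split at \(\sqrt{x}\) (the paper's \(M=\lfloor x/Y\rfloor\) equals your \(\lfloor\sqrt x\rfloor\)), the same three-piece estimates, and the same assembly of the main terms; you also spell out where \(h\ge 0\) bounds the fractional parts and how the stray \(\log x\) factor in the second piece is absorbed, which the paper leaves implicit.
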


\begin{proof}
Take \(Y=\sqrt{x}\) and \(M=\lfloor x/Y\rfloor\).  Dirichlet's
hyperbola decomposition gives
\begin{align*}
\sum_{dm\le x}h(d)
&=
\sum_{d\le Y}h(d)\left\lfloor\frac{x}{d}\right\rfloor
+
\sum_{m\le M}\Psi(x/m)
-
M\Psi(Y).
\end{align*}

The first term is
\[
x\sum_{d\le Y}\frac{h(d)}d+O(\Psi(Y))
=
ax\log Y+\beta x
+
O\!\left(
xe^{-b_1\sqrt{\log x}}+Y
\right).
\]
For the second term, uniformly for \(m\le M\), one has \(x/m\ge Y\);
therefore
\[
\sum_{m\le M}\Psi(x/m)
=
ax\sum_{m\le M}\frac1m
+
O\!\left(
x e^{-b_2\sqrt{\log Y}}
\sum_{m\le M}\frac1m
\right).
\]
After reducing the exponential constant,
\[
\sum_{m\le M}\Psi(x/m)
=
ax(\log M+\gamma)
+
O\!\left(
xe^{-b_3\sqrt{\log x}}+Y
\right).
\]
Finally,
\[
M\Psi(Y)
=
aMY
+
O\!\left(
MYe^{-b\sqrt{\log Y}}
\right)
=
ax
+
O\!\left(
xe^{-b_4\sqrt{\log x}}+Y
\right).
\]
Since
\[
\log Y+\log M
=
\log x+O(Y/x)
\]
and \(Y=x^{1/2}\) is absorbed by the exponential error, the result
follows.
\end{proof}

\begin{proof}[Proof of Theorem~\ref{thm:main}]
Apply Proposition~\ref{prop:hyperbola} with
\[
h=h_{K,f,\Sigma},
\qquad
a=\frac1c,
\qquad
\beta=\beta_{K,f,\Sigma}.
\]
Propositions~\ref{prop:deformed-pit} and \ref{prop:mertens} verify the
hypotheses.  Hence
\[
A_{K,f,\Sigma}(x)
=
\frac1c x\log x
+
\left(
\beta_{K,f,\Sigma}
+\frac{\gamma-1}{c}
\right)x
+
O\!\left(
xe^{-a\sqrt{\log x}}
\right).
\]
Using
\[
\beta_{K,f,\Sigma}
=
\frac{\kappa_{K,\Sigma}-\log c}{c}
+
J_{K,f,\Sigma}(1)
\]
gives
\[
C_{K,f,\Sigma}
=
\frac{\gamma+\kappa_{K,\Sigma}-\log c-1}{c}
+
J_{K,f,\Sigma}(1).
\]
For integer \(n\),
\[
A_{K,f,\Sigma}(n)
=
\log\Norm_\Sigma(n!_{K,f,\Sigma})
\]
by Proposition~\ref{prop:increment}.
\end{proof}

\section{Legendre subsets and examples}

Evrard and Fares \cite{EvrardFares} study subsets of a discrete
valuation ring whose factorial valuations satisfy
\[
v(n!_T)
=
\sum_{i\ge1}
\left\lfloor\frac{n}{q_i}\right\rfloor,
\]
where \(q_i\) is the number of residue classes of the subset modulo the
\(i\)-th power of the maximal ideal.

\begin{corollary}\label{cor:legendre}
Let \(R=\OKS\), and let \(T\subseteq R\) be a subset whose Bhargava
factorial ideals satisfy, for every \(\mathfrak p\notin\Sigma\),
\[
v_{\mathfrak p}(n!_{R,T})
=
\sum_{i\ge1}
\left\lfloor
\frac{n}{q_{\mathfrak p,i}}
\right\rfloor.
\]
Assume
\[
q_{\mathfrak p,i}
=
f(\mathfrak p)(\Norm\mathfrak p)^{i-1},
\qquad i\ge1,
\]
for an admissible function \(f\).  Then
\[
\log\Norm_\Sigma(n!_{R,T})
=
\frac1c n\log n
+
C_{K,f,\Sigma}n
+
O\!\left(
ne^{-a\sqrt{\log n}}
\right).
\]
\end{corollary}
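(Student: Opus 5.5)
The plan is to reduce Corollary~\ref{cor:legendre} directly to Theorem~\ref{thm:main} by identifying the two ideals \(n!_{R,T}\) and \(n!_{K,f,\Sigma}\). Nothing new has to be estimated: I will show that the two ideals have the same \(\mathfrak p\)-adic valuation at every \(\mathfrak p\notin\Sigma\). Both are integral ideals of \(R=\OKS\), which is a Dedekind domain whose nonzero primes are exactly the \(\mathfrak p\OKS\) with \(\mathfrak p\notin\Sigma\). Unique factorization (Section~2) then forces the ideals to be equal, so their \(\Norm_\Sigma\)-norms coincide.

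To compare valuations, I substitute the hypothesis \(q_{\mathfrak p,i}=f(\mathfrak p)(\Norm\mathfrak p)^{i-1}\) into the assumed Legendre formula for \(v_{\mathfrak p}(n!_{R,T})\). Reindexing with \(k=i-1\ge0\) gives
\[
v_{\mathfrak p}(n!_{R,T})
=\sum_{k\ge0}\left\lfloor\frac{n}{f(\mathfrak p)(\Norm\mathfrak p)^k}\right\rfloor
=v_{\mathfrak p}(n!_{K,f,\Sigma}),
\]
which is exactly the defining formula of \(n!_{K,f,\Sigma}\). I also need each Bhargava factorial to be a nonzero ideal with finite support, so that its \(\Norm_\Sigma\) is defined. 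This follows from the identity just displayed, together with the remark after the admissibility definition: only finitely many \(\mathfrak p\) have \(f(\mathfrak p)\le n\), and every other prime contributes zero. The case \(n=0\) is consistent, since both sides equal \(R\).

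With \(\log\Norm_\Sigma(n!_{R,T})=\log\Norm_\Sigma(n!_{K,f,\Sigma})\) established for all \(n\ge0\), I then invoke Theorem~\ref{thm:main}. Its hypotheses are exactly that \(f\) is a positive-integer-valued admissible function on the primes outside \(\Sigma\). The theorem gives the stated expansion with the same constants \(c\), \(C_{K,f,\Sigma}\) and \(a\).

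I do not expect a real analytic obstacle. The only point needing care is bookkeeping, and I will check three things:
\begin{itemize}[label={}, leftmargin=0pt]
\item \emph{Indexing.} The Evrard--Fares sum starts at \(i=1\) while the factorial sum starts at \(k=0\), and the hypothesis on \(q_{\mathfrak p,i}\) makes these match.
\item \emph{Positivity.} Each \(q_{\mathfrak p,i}\) is a positive integer, since it counts residue classes. This is consistent with \(f:\{\mathfrak p\notin\Sigma\}\to\mathbf Z_{\ge1}\), because \(f(\mathfrak p)=q_{\mathfrak p,1}\).
\item \emph{Normalisation.} The corollary is stated only in terms of valuations at \(\mathfrak p\notin\Sigma\), so primes in \(\Sigma\) never enter. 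Those primes are units in \(R\), which matches the definition of \(\Norm_\Sigma\).
\end{itemize}
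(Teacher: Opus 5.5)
Your proposal is correct and follows the paper's proof: reindex with \(k=i-1\) so the local valuation formula for \(n!_{R,T}\) becomes exactly that of \(n!_{K,f,\Sigma}\), then apply Theorem~\ref{thm:main}. Your bookkeeping checks (finite support, \(f(\mathfrak p)=q_{\mathfrak p,1}\in\mathbf Z_{\ge1}\), primes of \(\Sigma\) not entering) only spell out details the paper leaves implicit.
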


\begin{proof}
Set \(k=i-1\).  The local valuation formula is exactly that of
\(n!_{K,f,\Sigma}\), so Theorem~\ref{thm:main} applies.
\end{proof}

\begin{remark}
The number-field qualification is essential.  A Dedekind domain
finitely generated as a \(\mathbf Z\)-algebra can have positive
characteristic; for example, \(\mathbf F_p[t]\) is not a ring of
\(S\)-integers in a number field.  The present result concerns the
characteristic-zero setting.
\end{remark}

\begin{example}[The ordinary factorial]
Let \(K=\mathbf Q\), \(\Sigma=\varnothing\), and \(f(p)=p\).  Then
\(c=1\), \(J(s)=0\), and
\[
-\frac{\zeta'}{\zeta}(s)
=
\frac1{s-1}-\gamma+O(s-1),
\]
so \(\kappa_{\mathbf Q}=-\gamma\).  Therefore
\[
C_{\mathbf Q,f,\varnothing}=-1,
\]
and Theorem~\ref{thm:main} gives
\[
\log(n!)
=
n\log n-n
+
O\!\left(
ne^{-a\sqrt{\log n}}
\right),
\]
consistent with Stirling's formula.
\end{example}

\begin{example}
Let \(K=\mathbf Q\), \(\Sigma=\varnothing\), and \(f(p)=p-1\).  Then
\(c=1\), and Theorem~\ref{thm:main} gives
\[
\log\Norm(n!_{\mathbf Q,f,\varnothing})
=
n\log n
+
C_f n
+
O\!\left(
ne^{-a\sqrt{\log n}}
\right).
\]
This recovers the shape of the asymptotic studied in
\cite{DiazLegendre}, while the Dirichlet-series formula identifies the
linear coefficient.
\end{example}

\begin{remark}
Changing \(f\) at finitely many prime ideals, or enlarging \(\Sigma\)
by finitely many prime ideals, leaves the leading coefficient \(1/c\)
unchanged but alters \(J_{K,f,\Sigma}(1)\), and hence the linear
coefficient.
\end{remark}

\end{document}